\documentclass[a4paper]{article}

\usepackage[margin=1in]{geometry}

\usepackage[T1]{fontenc}
\newcommand{\changefont}[3]{
\fontfamily{#1} \fontseries{#2} \fontshape{#3} \selectfont}

\changefont{ptm}{m}{n}

\usepackage{setspace} 

\usepackage{graphicx}

\usepackage{amsfonts,amsmath}
\usepackage{amssymb}
\usepackage{graphics}
\usepackage{mathrsfs}
 
\usepackage{color}

\newtheorem{theorem}{Theorem}[section]

\newtheorem{lemma}{Lemma}[section]

\newtheorem{definition}{Definition}[section]

\long\def\symbolfootnote[#1]#2{\begingroup%
\def\thefootnote{\fnsymbol{footnote}}\footnote[#1]{#2}\endgroup} 

\begin{document}

\begin{center}
\Large \textbf{Anti-periodic Solutions of Quasilinear Impulsive Systems with Piecewise Constant Argument of Generalized Type}
\end{center}

\begin{center}
\normalsize \textbf{Fatma Tokmak Fen$^{a,}\symbolfootnote[1]{Corresponding Author  E-mail: fatmatokmak@gazi.edu.tr}$}, \, \textbf{Mehmet Onur Fen$^{b}$}, \, \textbf{Eman Bak{\i}r Ahmad Ajm$^{c}$} \\
\vspace{0.2cm}
\textit{\textbf{\footnotesize$^a$Department of Mathematics, Gazi University, 06560 Ankara, Turkey}} \\
\vspace{0.0cm}
\textit{\textbf{\footnotesize$^b$Department of Mathematics, TED University, 06420 Ankara, Turkey}} \\
\vspace{0.0cm}
\textit{\textbf{\footnotesize$^c$Graduate School of Natural and Applied Sciences, Gazi University, 06500 Ankara, Turkey}} \\
\vspace{0.0cm}
\end{center}

\vspace{0.3cm}

\begin{center}
\textbf{Abstract}
\end{center}

\noindent\ignorespaces
This study is devoted to anti-periodic solutions of quasilinear impulsive systems with piecewise constant argument of generalized type. We rigorously prove the existence and uniqueness of an anti-periodic solution making use of the Banach fixed point theorem. The presence of a term with piecewise constant argument is the main novelty. Appropriate examples which support the theoretical findings are provided.

\vspace{0.2cm}
 
\noindent\ignorespaces \textbf{Keywords:} impulsive systems, piecewise constant argument of generalized type, anti-periodic solution

\vspace{0.2cm}

\noindent\ignorespaces \textbf{2020 Mathematics Subject Classification:} 34K45, 34K60

\vspace{0.6cm}

\section{Introduction} \label{secintro}
Studies on differential equations with piecewise constant argument of generalized type was initiated by Akhmet \cite{Akhmet07,Akhmet08}. Such arguments generalize the ones based on the greatest integer function  utilized in the papers \cite{Cooke84,Wiener88}. Impulsive differential equations, on the other hand, are useful to model processes in which sudden changes occur \cite{Samoilenko95,Akhmet10}. The reader is referred to the book \cite{Stamova16a} for applications of systems with impulses. In the present study, we focus on impulsive systems with piecewise constant argument of generalized type and demonstrate that bounded as well as anti-periodic solutions take place in the dynamics under sufficient conditions.

Suppose that $\theta=\left\{\theta_j\right\}_{j\in\mathbb Z}$ is a strictly increasing sequence of real numbers such that $\left|\theta_j\right| \to \infty$ as $\left|j\right|\to\infty$. We will denote by $\mathcal{PC}(\theta)$ the set of all functions $\varphi:\mathbb R \to \mathbb R^n$ such that for each $j\in\mathbb Z$,  the right and left limits of $\varphi$ both exist at $\theta_j$, $\varphi$ is continuous on the interval $(\theta_{j},\theta_{j+1})$, and it is left continuous at $\theta_j$. The main purpose of the present study is the investigation of anti-periodic solutions of impulsive systems with piecewise constant argument of generalized type in the form
\begin{eqnarray} \label{mainsyst}
\begin{aligned}
& z'(t) = A z(t) + f (z(t), z(\gamma(t)))+g(t), \ t\neq \theta_j,    \\
& \Delta z \big{|}_{t=\theta_j}= Bz(\theta_j) + h(z(\theta_j)),  
\end{aligned} 
\end{eqnarray}
where $t\in\mathbb R$, $j\in\mathbb Z$, $A\in\mathbb R^{n \times n}$ and $B\in\mathbb R^{n \times n}$ are constant commutative matrices, the functions $f:\mathbb R^n \times \mathbb R^n \to \mathbb R^n$ and $h:\mathbb R^n \to \mathbb R^n$ are continuous in all their arguments, the function $g:\mathbb R \to \mathbb R^n$ belongs to $\mathcal{PC}(\theta)$, $\Delta z \big{|}_{t=\theta_j}=z(\theta_j+)-z(\theta_j)$, and $z(\theta_j+)=\lim\limits_{t \to \theta_j^+} z(t)$. The piecewise constant argument $\gamma(t)$ is defined via the equation $\gamma(t) = \zeta_j$ for $\theta_j \leq t < \theta_{j+1}$, $j\in\mathbb Z$, in which $\left\{\zeta_j\right\}_{j\in\mathbb Z}$ is a sequence of real numbers satisfying $\theta_j\leq \zeta_j\leq \theta_{j+1}$ for each $j\in\mathbb Z$. We additionally suppose that there exist a natural number $p$ and a real number $\omega>0$ such that $\theta_{j+p}=\theta_j+\omega$ and $\zeta_{j+p}=\zeta_j+\omega$ for every $j\in\mathbb Z$.

The existence and uniqueness of an anti-periodic solution of impulsive retarded functional differential equation was discussed in paper \cite{Afonso17}. The jump equation of the model handled in \cite{Afonso17} does not contain a nonlinear term. However, our model contains a term with piecewise constant argument of generalized type and there is a nonlinear term in the jump equation. On the other hand, the existence of anti-periodic solutions in a class of first order impulsive functional differential equations of the form
\begin{equation} \label{liusystem} 
\begin{aligned}
  & x'(t)+a(t)x(t)=f(t,x(t),x(\alpha_1(t)), \ldots, x(\alpha_n(t))), \ t\neq \theta_j  \\
  & \Delta x |_{t=\theta_j} = J_j(x(\theta_j))
\end{aligned}
\end{equation}
was discussed in the studies \cite {Liu12,Liu13} utilizing the inequalities $x(x+J_j(x)) \geq 0$, $J_j(x)(2x+J_j(x)) \leq 0$, $x J_j(x) \geq 0$. Our results are different compared to \cite{Liu12} and \cite{Liu13} because of the following reasons.
\begin{itemize}
	\item [\textbf{i.}] The model (\ref{liusystem}), which was taken into account in the studies \cite {Liu12,Liu13}, is one dimensional. However, there is no restriction in the dimension of system (\ref{mainsyst}). 
	\item [\textbf{ii.}] The functions $\alpha_k$, $k=1,2,\ldots,n$, in  (\ref{liusystem}) are assumed to be of class $C^1$ on $\mathbb R$ in \cite {Liu12,Liu13}. But this is not true for the piecewise constant argument $\gamma(t)$ in (\ref{mainsyst}) owing to possible discontinuities at $\theta_j$, $j\in\mathbb Z$.
	\item [\textbf{iii.}] The Banach fixed point theorem is utilized and the uniqueness of the anti-periodic solution is  obtained in the present study. In the studies \cite{Liu12,Liu13}, Schaefer fixed point theorem is the main tool and uniqueness is not discussed.
\item [\textbf{iv.}] To obtain the existence and uniqueness of an	anti-periodic solution we do not use inequalities such as $x(x+J_j(x)) \geq 0$, $J_j(x)(2x+J_j(x)) \leq 0$, and $x J_j(x) \geq 0$. Example $1$ provided in Section \ref{secexamples} reveals this distinctiveness.
	\end{itemize}
	
	The rest of the paper is organized as follows. In Section, \ref{secprelim} we propose some sufficient conditions for the existence and uniqueness of bounded and anti-periodic solutions. Moreover, some properties of the matriciant of linear impulsive system associated with (\ref{mainsyst}) are discussed. Section \ref{secbddsol}, on the other hand, is devoted to bounded solutions. We provide our main result on the existence and uniqueness of anti-periodic solutions in Section \ref{secantiper}. Section \ref{secexamples} is concerned with appropriate examples supporting the novel theoretical results. Finally, some concluding remarks are given in Section \ref{secconcremark}.

\section{Preliminaries} \label{secprelim}

A function $\phi \in \mathcal{PC}(\theta)$ is said to be a solution of system (\ref{mainsyst}) if:
\begin{itemize}
	\item [\textbf{i.}] The derivative $\phi'(t)$ exists at each $t\in\mathbb R$ with the possible exception of the points $\theta_j$, $j\in\mathbb Z$, where the one-sided derivatives exist;
	\item [\textbf{ii.}] $\phi(t)$ satisfies the differential equation in (\ref{mainsyst}) on each interval $(\theta_j, \theta_{j+1})$, $j\in\mathbb Z$, and this equation holds for the right derivative of $\phi(t)$ at the points $\theta_j$, $j\in\mathbb Z$; 
	\item [\textbf{iii.}] $\phi(t)$ satisfies the jump equation in (\ref{mainsyst}) for each $j\in\mathbb Z$.
\end{itemize}

Throughout the paper we make use of the Euclidean norm for vectors and the spectral norm for square matrices. The following assumptions on system (\ref{mainsyst}) are required.
\begin{itemize}
\item[\textbf{(A1)}]  $\det(I+B)\neq 0$, where $I$ is the $n\times n$ identity matrix.
\item[\textbf{(A2)}] The real parts of all eigenvalues of the matrix $A+\displaystyle \frac{p}{\omega}\, \textrm{Log}(I+B)$ are negative.
\item[\textbf{(A3)}] There exist positive numbers $M_f$, $M_g$, and $M_h$ such that  
 $\sup\limits_{x\in\mathbb R^n, \, y\in\mathbb R^n} \left\|f(x,y)\right\|\leq M_f$, 
  $\sup\limits_{t\in\mathbb R} \left\|g(t)\right\|\leq M_g$,
  and
  $\sup\limits_{x\in\mathbb R^n} \left\|h(x)\right\|\leq M_h$.
\item[\textbf{(A4)}] There exist positive numbers $L_1$, $L_2$, and $L_h$ such that
\subitem \textbf{i.} $\left\|f(x_1,y)-f(x_2,y)\right\| \leq L_1 \left\|x_1-x_2\right\| $ for every $x_1,x_2,y\in\mathbb R^n$,
\subitem \textbf{ii.} $\left\|f(x,y_1)-f(x,y_2)\right\| \leq L_2 \left\|y_1-y_2\right\|$ for every $x,y_1,y_2\in\mathbb R^n$,
\subitem \textbf{iii.} $\left\|h(x_1) - h(x_2)\right\| \leq L_h \left\|x_1-x_2\right\| $ for every $x_1,x_2 \in \mathbb R^n$.
\item[\textbf{(A5)}] $K \displaystyle\left(  \frac{L_1 + L_2}{\alpha}  +\frac{L_h p }{1-e^{-\alpha \omega}} \right)<1$.
\end{itemize}

In the sequel, the number of the terms of the sequence $\{\theta_j\}_{j\in\mathbb Z}$ belonging to an interval $J$ of the real axis will be denoted by $i(J)$. It can be shown that the inequality
\begin{eqnarray*} \label{countingineq}
-p<i([a,b)) - \displaystyle \frac{p}{\omega} (b-a) \leq p  
\end{eqnarray*}
is fulfilled for any real numbers $a$ and $b$ with $a<b$. Moreover,
\begin{equation} \label{propertyii}
i([a+\omega,b+\omega)) = i([a,b)).
\end{equation}

Let us denote by $U(t,\tau)$ the matriciant of the linear homogeneous impulsive system
\begin{eqnarray} \label{linearsystem}
&& z'(t) = A z(t), \ t\neq \theta_j, \nonumber \\
&& \Delta z \big{|}_{t=\theta_j}= Bz(\theta_j),
\end{eqnarray}
in which the matrices $A$, $B$ and the impulse moments $\theta_j$, $j\in\mathbb Z$, are the same with the ones used in (\ref{mainsyst}). Under the assumption $(A1)$ we have 
\begin{eqnarray*}
U(t,\tau)=e^{A(t-\tau)} (I+B)^{i([\tau,t))}, \, t>\tau,
\end{eqnarray*}
\begin{eqnarray*}
U(t,\tau)=e^{A(t-\tau)} (I+B)^{-i([t,\tau))}, \, t<\tau,
\end{eqnarray*}
and $U(\tau,\tau)=I$. Moreover, for every $t\in \mathbb R$ and $j\in\mathbb Z$ the equation
\begin{eqnarray*}
U(t,\theta_j +) = U(t,\theta_j) (I+B)^{-1}
\end{eqnarray*} holds.
 If $(A2)$ additionally holds, then there exist real numbers $K\geq 1$ and $\alpha>0$ such that
\begin{eqnarray} \label{matriciantineq1}
\left\|U(t,\tau)\right\|\leq Ke^{-\alpha (t-\tau)}, \ t\geq \tau
\end{eqnarray}
 and
\begin{eqnarray} \label{matriciantineq2}
	\left\|U(t,\theta_j+)\right\|\leq Ke^{-\alpha (t-\theta_j)}, \  j\in \mathbb Z.
\end{eqnarray}
On the other hand, for every $t,\tau\in\mathbb R$ and $j\in\mathbb Z$, one can verify utilizing (\ref{propertyii}) that 
\begin{eqnarray} \label{cauchy11}
	U(t+\omega,\tau+\omega) = U(t,\tau) 
\end{eqnarray}
and
\begin{eqnarray} \label{cauchy12}
	U(t+\omega,(\theta_j+\omega)+) = U(t,\theta_j+). 
\end{eqnarray}

In the next section, we focus on the existence and uniqueness of a  bounded solution of system  (\ref{mainsyst}).

\section{Bounded Solutions} \label{secbddsol}

According to the results of the books \cite{Samoilenko95,Akhmet10} we have the following assertion.
\begin{lemma} \label{bddsolnlemma}
Suppose that the assumptions $(A1)-(A3)$ are fulfilled. A bounded function $\phi:\mathbb R \to \mathbb R^n$ is a solution of the impulsive system (\ref{mainsyst}) if  and only if the equation
\begin{eqnarray*}
\phi(t) = \displaystyle \int_{-\infty}^{t} U(t,s) \left[f(\phi(s), \phi(\gamma(s))) + g(s)\right] ds + \sum_{-\infty < \theta_j < t} U(t,\theta_j+) h(\phi(\theta_j))
\end{eqnarray*}
holds.
\end{lemma}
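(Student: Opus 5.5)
The plan is to prove both directions by comparing $\phi$ with the variation-of-constants formula for the linear impulsive system (\ref{linearsystem}). The nonlinearities are frozen along $\phi$. Set
\[
F(t)=f(\phi(t),\phi(\gamma(t)))+g(t), \qquad h_j=h(\phi(\theta_j)).
\]
Both are bounded by $(A3)$. Since $\phi$ and $g$ belong to $\mathcal{PC}(\theta)$ and $\gamma$ is constant on each $[\theta_j,\theta_{j+1})$, $F$ is continuous on each such interval with one-sided limits. For a fixed $\tau$, any solution satisfies on $t\geq \tau$ the Cauchy formula
\[
\phi(t)=U(t,\tau)\phi(\tau)+\int_\tau^t U(t,s)F(s)\,ds+\sum_{\tau\le\theta_j<t}U(t,\theta_j+)h_j ,
\]
which follows from the results in \cite{Samoilenko95,Akhmet10}. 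It can also be checked directly: differentiate $U(t,\tau)^{-1}\phi(t)$ on each open interval $(\theta_j,\theta_{j+1})$. At a jump, use $U(t,\theta_j+)=U(t,\theta_j)(I+B)^{-1}$ together with
\[
\phi(\theta_j+)=(I+B)\phi(\theta_j)+h_j .
\]
The jump relation is where $(A1)$ and the commutativity of $A$ and $B$ enter the explicit form of $U$.

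For necessity, let $\phi$ be a bounded solution and let $\tau\to-\infty$ in the Cauchy formula. By (\ref{matriciantineq1}), $\|U(t,\tau)\phi(\tau)\|\le K e^{-\alpha(t-\tau)}\sup\|\phi\|\to 0$. The integral converges absolutely because $\|U(t,s)F(s)\|\le K(M_f+M_g)e^{-\alpha(t-s)}$. The series converges absolutely by (\ref{matriciantineq2}): its terms are bounded by $KM_he^{-\alpha(t-\theta_j)}$. Grouping the $\theta_j$ into blocks of length $\omega$ and using the counting inequality (at most $2p$ points per block) gives a geometric series with ratio $e^{-\alpha\omega}$. Passing to the limit yields the integral equation.

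For sufficiency, let $\phi$ be bounded and satisfy the equation. The same estimates show the right-hand side is well defined, and the first step is to check that $\phi\in\mathcal{PC}(\theta)$. For $t\ge\tau$, split the integral and the sum at $\tau$ and use the semigroup property $U(t,s)=U(t,\tau)U(\tau,s)$ for $s\le\tau\le t$; the matrix $U(t,\tau)$ factors out of the convergent integral and series. Applied at $\tau$ itself, the equation identifies the past part with $\phi(\tau)$, so $\phi(t)$ equals the Cauchy formula above with initial value $\phi(\tau)$. On each $(\theta_j,\theta_{j+1})$, $\tau$ can be chosen inside the interval. Then the sum is constant there, and differentiating $e^{A(t-\tau)}(\cdot)$ and the integral gives $\phi'=A\phi+F$, including the right derivative at $\theta_j$ since $F$ is right continuous there. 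For the jump, compare $\phi(\theta_j+)$ and $\phi(\theta_j)$ from the formula. The new term $U(\theta_j+,\theta_j+)h_j=h_j$ appears, and the remaining terms get multiplied by $(I+B)$, because $U(\theta_j+,s)=(I+B)U(\theta_j,s)$. This gives $\Delta\phi|_{t=\theta_j}=B\phi(\theta_j)+h_j$.

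I expect the main obstacle to be bookkeeping rather than any substantive difficulty. One issue is handling the one-sided limits of $U$ at impulse moments consistently with the left-continuity convention, $U(\theta_j+,s)$ versus $U(\theta_j,s)$. The other is justifying the uniform convergence of the impulsive sum, which is what the counting inequality is for.
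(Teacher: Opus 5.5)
Your argument is correct. The paper gives no proof of this lemma; it only cites \cite{Samoilenko95,Akhmet10}. Your proposal is a self-contained version of the standard argument in those sources: the Cauchy formula on $[\tau,t]$, then $\tau\to-\infty$ using (\ref{matriciantineq1})--(\ref{matriciantineq2}), and conversely splitting the integral and the sum at $\tau$ via the semigroup property of $U$. Compared with the bare citation, your version shows exactly where (A1), the commutativity of $A$ and $B$ (needed for the explicit form of $U$), and the exponential bounds coming from (A2) are used. Because you freeze $F$ and $h_j$ along $\phi$, you need no existence theory for initial value problems of the system with $\gamma$. This matters, since $\gamma$ may be an advanced argument on $[\theta_j,\zeta_j)$.

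A few details should be tightened.

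\textbf{Right continuity of $F$.} $F$ is not continuous on $[\theta_j,\theta_{j+1})$. By left continuity, $F(\theta_j)=f(\phi(\theta_j),\phi(\zeta_j))+g(\theta_j)$ uses the pre-jump values, while $F(\theta_j+)=f(\phi(\theta_j+),\phi(\zeta_j))+g(\theta_j+)$. Since $\phi$ itself jumps at $\theta_j$, the right-derivative condition in the definition of a solution can only mean
$\lim_{t\to\theta_j+}\bigl(\phi(t)-\phi(\theta_j+)\bigr)/(t-\theta_j)=A\phi(\theta_j+)+F(\theta_j+)$.
That is exactly what your formula with $\tau=\theta_j$ gives. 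So replace ``$F$ is right continuous at $\theta_j$'' by ``$F$ has a right limit at $\theta_j$''.

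\textbf{Counting impulse moments.} Since $\theta_{j+p}=\theta_j+\omega$, every interval $[a,a+\omega)$ contains exactly $p$ impulse moments. Your bound of $2p$ suffices for convergence, but the exact count is what produces the constant $p/(1-e^{-\alpha\omega})$ used in $\mathcal S$ and in Theorem \ref{lemmaboundedness}.

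\textbf{Measurability.} In the sufficiency direction, the integral equation tacitly requires $\phi$ to be measurable, for instance $\phi\in\mathcal{PC}(\theta)$, as in every application in the paper. With that understood, your derivation of the $\mathcal{PC}(\theta)$ property, the differential equation, and the jump relation from the formula is fine.
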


Let $d_{\infty}:\mathcal{PC}(\theta)\times \mathcal{PC}(\theta) \to [0,\infty)$ be the metric given by $$d_{\infty}\left(\phi,\psi \right)=\displaystyle \sup_{t\in\mathbb R} \left\| \phi(t) -\psi(t)\right\|,$$ and suppose that \begin{equation}\label{definitonsets}
 \mathcal{S} = \left\lbrace \phi \in \mathcal{PC}(\theta): \, \sup_{t\in\mathbb R} \left\|\phi(t) \right\| \leq \dfrac{K(M_f + M_g)}{\alpha} + \dfrac{KM_h p}{1-e^{-\alpha \omega}} \right\rbrace.
 \end{equation} 

\begin{lemma}
The metric space $\left( \mathcal{S}, d_{\infty}\right)$ is complete.
\end{lemma}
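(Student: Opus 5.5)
The plan is to show that $\mathcal{S}$ is a closed subset of a complete metric space. We do this directly with Cauchy sequences rather than by naming an ambient space, because $d_\infty$ is a genuine metric only on bounded elements of $\mathcal{PC}(\theta)$. Write $R = \frac{K(M_f+M_g)}{\alpha} + \frac{KM_h p}{1-e^{-\alpha\omega}}$. Let $\{\phi_k\}$ be a Cauchy sequence in $(\mathcal{S},d_\infty)$. For each fixed $t$, the sequence $\{\phi_k(t)\}$ is Cauchy in $\mathbb R^n$, since $\|\phi_k(t)-\phi_m(t)\|\le d_\infty(\phi_k,\phi_m)$. Completeness of $\mathbb R^n$ then gives a pointwise limit $\phi(t)$. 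The standard argument of letting $m\to\infty$ in $\|\phi_k(t)-\phi_m(t)\|<\varepsilon$ shows that the convergence $\phi_k\to\phi$ is uniform on $\mathbb R$. Letting $k\to\infty$ in $\|\phi_k(t)\|\le R$ gives $\sup_t\|\phi(t)\|\le R$.

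It remains to verify that $\phi\in\mathcal{PC}(\theta)$; this is the only step needing care. Continuity of $\phi$ on each open interval $(\theta_j,\theta_{j+1})$ and left continuity at $\theta_j$ follow from the fact that a uniform limit of continuous (respectively left continuous) functions is continuous (respectively left continuous). The usual $\varepsilon/3$ argument applies on $(\theta_{j-1},\theta_j]$, restricted to one side.

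For the existence of the one-sided limits at $\theta_j$, we use a Cauchy criterion. Given $\varepsilon>0$, choose $k$ with $\sup_t\|\phi_k(t)-\phi(t)\|<\varepsilon/3$. Since $\phi_k(\theta_j+)$ exists, there is $\delta>0$ such that $\|\phi_k(s)-\phi_k(r)\|<\varepsilon/3$ for $s,r\in(\theta_j,\theta_j+\delta)$. Hence $\|\phi(s)-\phi(r)\|<\varepsilon$ there. By completeness of $\mathbb R^n$, the limit $\phi(\theta_j+)$ exists. The left limit is handled the same way, and it also follows from left continuity.

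Therefore $\phi\in\mathcal{S}$ and $d_\infty(\phi_k,\phi)\to 0$, which proves that $(\mathcal{S},d_\infty)$ is complete. The main (mild) obstacle is the existence of the right limits at $\theta_j$, handled by the Cauchy criterion above. Everything else is the routine uniform-limit argument.
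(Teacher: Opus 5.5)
Your proof is correct and rests on the same idea as the paper's: $\mathcal{S}$ is closed because its defining bound $\sup_{t\in\mathbb R}\|\phi(t)\|\le \frac{K(M_f+M_g)}{\alpha}+\frac{KM_h p}{1-e^{-\alpha\omega}}$ survives uniform limits. The only difference is that the paper cites the completeness of $(\mathcal{PC}(\theta),d_\infty)$ from the literature, whereas you prove it directly for bounded functions, including that left continuity and the right limits at each $\theta_j$ are preserved; this also avoids a point the paper glosses over, namely that $d_\infty$ can be infinite on unbounded elements of $\mathcal{PC}(\theta)$.
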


\noindent \textbf{Proof.} Because the metric space $\left( \mathcal{PC}(\theta), d_{\infty}\right)$ is complete \cite{Akhmet10}, it is enough to show that $\mathcal{S}$ is a closed subset of $ \mathcal{PC}(\theta)$. Suppose that a function $\phi$ belongs to the closure of $\mathcal{S}$. Then, there is a sequence $\left\lbrace \phi_n\right\rbrace$ of functions in $\mathcal{S}$ such that $\phi_n \to \phi$ as $n \to \infty$ uniformly on $\mathbb R$. For each $n\in\mathbb N$ and $t\in\mathbb R$, we have $$\left\| \phi(t)\right\| \leq \left\| \phi(t)-\phi_n(t)\right\| + \left\| \phi_n(t)\right\| \leq  \sup_{t\in\mathbb R}\left\| \phi(t)-\phi_n(t)\right\| + \dfrac{K(M_f + M_g)}{\alpha} + \dfrac{KM_h p}{1-e^{-\alpha \omega}}.$$
Owing to the uniform convergence we have $\displaystyle \sup_{t\in\mathbb R}\left\| \phi(t)-\phi_n(t)\right\| \to 0$ as $n \to \infty$. Thus, $$\sup_{t\in\mathbb R}\left\| \phi(t)\right\| \leq  \dfrac{K(M_f + M_g)}{\alpha} + \dfrac{KM_h p}{1-e^{-\alpha \omega}}.$$ The last inequality implies that $\phi\in\mathcal{S}$. Therefore, $\mathcal{S}$ is a closed subset of $\mathcal{PC}(\theta)$. Accordingly, the metric space $\left( \mathcal{S}, d_{\infty}\right)$ is complete. $\square$

The completeness of the metric $\left( \mathcal{S}, d_{\infty}\right)$ is used in the proof of the following theorem.

\begin{theorem} \label{lemmaboundedness}
If the assumptions $(A1)-(A5)$ hold, then the impulsive system (\ref{mainsyst}) possesses a unique bounded solution in $\mathcal{S}$.
\end{theorem}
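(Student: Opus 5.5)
We define an operator $\Pi$ on $\mathcal{S}$ by
\begin{eqnarray*}
\Pi\phi(t) = \displaystyle \int_{-\infty}^{t} U(t,s) \left[f(\phi(s), \phi(\gamma(s))) + g(s)\right] ds + \sum_{-\infty < \theta_j < t} U(t,\theta_j+) h(\phi(\theta_j)).
\end{eqnarray*}
By Lemma \ref{bddsolnlemma}, bounded solutions of (\ref{mainsyst}) are exactly the fixed points of $\Pi$. The plan is to show that $\Pi$ maps $\mathcal{S}$ into itself and is a contraction with respect to $d_\infty$. The conclusion then follows from the Banach fixed point theorem, since the completeness of $(\mathcal{S},d_\infty)$ has already been established. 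Uniqueness in $\mathcal{S}$ is then automatic.

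\textbf{Step 1: $\Pi\phi\in\mathcal{PC}(\theta)$.} The integral term is continuous in $t$, because $U(t,s)$ is continuous in $t$ except at the impulse moments, where it is left continuous with finite right limits. The sum picks up one new term exactly when $t$ passes some $\theta_j$. So $\Pi\phi$ is left continuous with one-sided limits at each $\theta_j$. The composition $s\mapsto f(\phi(s),\phi(\gamma(s)))$ is piecewise continuous, since $\gamma$ is constant on $[\theta_j,\theta_{j+1})$, so the integral is well defined.

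\textbf{Step 2: the key sum estimate.} Both the invariance bound and the contraction bound rest on
\[
\sum_{\theta_j<t} e^{-\alpha(t-\theta_j)} \le \frac{p}{1-e^{-\alpha\omega}}.
\]
To prove it, split $(-\infty,t)$ into the windows $[t-(k+1)\omega,\, t-k\omega)$ for $k\ge 0$. By the periodicity $\theta_{j+p}=\theta_j+\omega$, each window contains exactly $p$ impulse moments. This also follows from the counting inequality with $b-a=\omega$, which gives $i([a,b))\le 2p$, but the periodicity argument gives exactly $p$. Each term in the $k$-th window is at most $e^{-\alpha k\omega}$, so the sum is bounded by the geometric series $p\sum_{k\ge 0} e^{-\alpha k\omega}$. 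Care is needed at the endpoints of the windows, and I expect this bookkeeping to be the main obstacle: it is the only place where the impulse structure genuinely enters.

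\textbf{Step 3: invariance.} By (\ref{matriciantineq1}), (\ref{matriciantineq2}) and (A3),
\[
\|\Pi\phi(t)\| \le K(M_f+M_g)\int_{-\infty}^t e^{-\alpha(t-s)}\,ds + KM_h\sum_{\theta_j<t} e^{-\alpha(t-\theta_j)}.
\]
The integral equals $1/\alpha$, and Step 2 bounds the sum. This gives exactly the radius defining $\mathcal{S}$, so $\Pi\mathcal{S}\subseteq\mathcal{S}$.

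\textbf{Step 4: contraction.} For $\phi,\psi\in\mathcal{S}$, (A4) gives
\[
\|f(\phi(s),\phi(\gamma(s)))-f(\psi(s),\psi(\gamma(s)))\| \le L_1\|\phi(s)-\psi(s)\|+L_2\|\phi(\gamma(s))-\psi(\gamma(s))\| \le (L_1+L_2)\,d_\infty(\phi,\psi).
\]
The second inequality is where the piecewise constant argument is harmless: taking the supremum over all of $\mathbb R$ controls the value at $\gamma(s)$ as well. Similarly, $\|h(\phi(\theta_j))-h(\psi(\theta_j))\|\le L_h\,d_\infty(\phi,\psi)$. Repeating the estimates of Step 3 yields
\[
d_\infty(\Pi\phi,\Pi\psi)\le K\left(\frac{L_1+L_2}{\alpha}+\frac{L_hp}{1-e^{-\alpha\omega}}\right)d_\infty(\phi,\psi).
\]
By (A5) the constant is less than $1$, so $\Pi$ is a contraction and the Banach fixed point theorem applies.
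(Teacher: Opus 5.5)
Your proposal is correct and follows the paper's proof essentially step for step. You use the same integral operator, the same invariance bound via (\ref{matriciantineq1})--(\ref{matriciantineq2}) and (A3), and the same contraction estimate via (A4) and (A5); your Steps 1 and 2 simply supply details the paper leaves implicit, namely membership of the image in $\mathcal{PC}(\theta)$ and the exact count of $p$ impulse moments in each window of length $\omega$. One small correction: the integral term is not continuous at $\theta_j$, since $U(\theta_j+,s)=(I+B)U(\theta_j,s)$ for $s<\theta_j$, but it is left continuous with right limits, which is all your conclusion needs; and the endpoint bookkeeping in Step 2 is harmless, because $\theta_j\in[t-(k+1)\omega,t-k\omega)$ gives $t-\theta_j>k\omega$ directly.
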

\noindent \textbf{Proof.} Let $T$ be the operator defined on the metric space $\left( \mathcal{S}, d_{\infty}\right)$ such that
\begin{equation} \label{operatort}
(T \phi)(t) = \displaystyle \int_{-\infty}^{t} U(t,s) \left[f(\phi(s), \phi(\gamma(s))) + g(s)\right] ds + \sum_{-\infty < \theta_j < t} U(t,\theta_j+) h(\phi(\theta_j)).
\end{equation} 
In the proof, we will verify that $T$ has a unique fixed point in $\mathcal{S}$ using the Banach fixed point theorem. The fixed point of $T$ is the unique bounded solution of (\ref{mainsyst}) in accordance with Lemma \ref{bddsolnlemma}.

Let $\phi$ be an element of $\mathcal{S}$. Then, we have
\begin{equation*} \begin{aligned}
&\left\| (T \phi)(t)\right\|  \leq  \displaystyle \int_{-\infty}^{t} \left\| U(t,s)\right\|  \left\|  f(\phi(s), \phi(\gamma(s))) + g(s)\right\|   ds + \sum_{-\infty < \theta_j < t} \left\| U(t,\theta_j+)\right\| \left\|  h(\phi(\theta_j))\right\| \\
& \leq \displaystyle \int_{-\infty}^{t} K (M_f + M_g) e^{-\alpha(t-s)} ds +  \sum_{-\infty < \theta_j < t} KM_h e^{-\alpha(t-\theta_j)} \\
& < \dfrac{K(M_f + M_g)}{\alpha} + \dfrac{KM_h p}{1-e^{-\alpha \omega}}.
	\end{aligned}
\end{equation*}
Hence, $\displaystyle \sup_{t\in\mathbb R} \left\| (T \phi)(t)\right\| \leq \dfrac{K(M_f + M_g)}{\alpha} + \dfrac{KM_h p}{1-e^{-\alpha \omega}}$ so that $T\phi\in \mathcal{S}$. For that reason, we have $T\left(\mathcal{S} \right)\subseteq \mathcal{S}$.

Now, to confirm that $T$ is a contraction mapping, let us take two elements $\phi$ and $\psi$ of $\mathcal{S}$. One can verify for every $t\in\mathbb R$ that
\begin{equation*} \begin{aligned}
\left\| (T \phi)(t)-(T \psi)(t)\right\| &  \leq  \displaystyle \int_{-\infty}^{t} \left\| U(t,s)\right\|  \left\|  f(\phi(s), \phi(\gamma(s))) -f(\psi(s), \psi(\gamma(s))) \right\|   ds \\
& + \sum_{-\infty < \theta_j < t} \left\| U(t,\theta_j+)\right\| \left\|  h(\phi(\theta_j)) -h(\psi(\theta_j))\right\| \\
 &  \leq  \displaystyle \int_{-\infty}^{t} \left\| U(t,s)\right\|  \left\|  f(\phi(s), \phi(\gamma(s))) -f(\psi(s), \phi(\gamma(s))) \right\|   ds \\
  & + \displaystyle \int_{-\infty}^{t} \left\| U(t,s)\right\|  \left\|  f(\psi(s), \phi(\gamma(s))) -f(\psi(s), \psi(\gamma(s))) \right\|   ds \\
& + \sum_{-\infty < \theta_j < t} \left\| U(t,\theta_j+)\right\| \left\|  h(\phi(\theta_j)) -h(\psi(\theta_j))\right\| \\
 &  \leq  \displaystyle \int_{-\infty}^{t} KL_1 e^{-\alpha(t-s)}  \left\|   \phi(s)  - \psi(s)   \right\|   ds \\
& + \displaystyle \int_{-\infty}^{t}  KL_2 e^{-\alpha(t-s)}  \left\|   \phi(\gamma(s))  - \psi(\gamma(s))   \right\|  ds \\
& + \sum_{-\infty < \theta_j < t} K L_h e^{-\alpha(t-\theta_j)} \left\|  \phi(\theta_j) -\psi(\theta_j)\right\| \\
 &  \leq K \displaystyle\left(  \frac{L_1 + L_2}{\alpha}  +\frac{L_h p }{1-e^{-\alpha \omega}} \right) d_{\infty}(\phi,\psi).  
	\end{aligned}
\end{equation*}
Therefore,  $$ d_{\infty}\left( T \phi,T \psi  \right)  \leq K \displaystyle\left(  \frac{L_1 + L_2}{\alpha}  +\frac{L_h p }{1-e^{-\alpha \omega}} \right) d_{\infty}(\phi,\psi).$$
According to the assumption $(A5)$, $T$ is a contraction mapping. Since $\left( \mathcal{S}, d_{\infty}\right)$ is a complete metric space, there is a unique fixed point of $T$ in $\mathcal{S}$ by the Banach fixed point theorem. Consequently, system (\ref{mainsyst}) possesses a unique bounded solution in $\mathcal{S}$. $\square$

The next section is concerned with anti-periodic solutions of the impulsive system  (\ref{mainsyst}).

\section{Anti-periodic Solutions} \label{secantiper}

In this section, we discuss the existence and uniqueness of an anti-periodic solution of system (\ref{mainsyst}) based on the following definition.
\begin{definition} (\cite{Liu12}) \label{defn1}
 A function $\sigma\in\mathcal{PC}(\theta)$ is called anti-periodic if there exists a positive real number $\omega$ such that $\sigma(t+\omega) = - \sigma(t)$ for every $t\in\mathbb R$. The number $\omega$ is called an anti-period of $\sigma$.
\end{definition}

The subsequent assumption on the functions $f$ and $h$ in (\ref{mainsyst}) is needed for the verification of anti-periodicity.

\begin{itemize}
\item[\textbf{(A6)}] $f(-x,-y)=-f(x,y)$ and $h(-x)=-h(x)$ for every $x,y\in\mathbb R^n$.
\end{itemize}

The main result of this section is mentioned in the next theorem.
\begin{theorem} \label{antiperthm}
Suppose that the assumptions $(A1)-(A6)$ hold. If $g(t)$ is an anti-periodic function with anti-period $\omega$, then the unique bounded solution of the impulsive system (\ref{mainsyst}) is anti-periodic with anti-period $\omega$.
\end{theorem}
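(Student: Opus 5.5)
Let $\phi$ denote the unique bounded solution of (\ref{mainsyst}) in $\mathcal{S}$ given by Theorem \ref{lemmaboundedness}. It is the unique fixed point of the operator $T$ in (\ref{operatort}). The plan is to show that the function $\psi(t) = -\phi(t+\omega)$ is also a fixed point of $T$ in $\mathcal{S}$. Uniqueness then forces $\psi=\phi$, that is, $\phi(t+\omega)=-\phi(t)$ for all $t\in\mathbb R$.

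First, $\psi\in\mathcal{S}$. Since $\theta_{j+p}=\theta_j+\omega$, the shift by $\omega$ maps the set $\{\theta_j\}$ onto itself, so $\psi$ is left continuous with one-sided limits at each $\theta_j$ and continuous in between. Hence $\psi\in\mathcal{PC}(\theta)$, and $\sup\|\psi\|=\sup\|\phi\|$.

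Next, compute $(T\psi)(t)$ directly, rather than checking that $\psi$ solves the system. Two facts are needed. The first is $\gamma(s+\omega)=\gamma(s)+\omega$: if $\theta_j\le s<\theta_{j+1}$, then $\theta_{j+p}\le s+\omega<\theta_{j+p+1}$, so $\gamma(s+\omega)=\zeta_{j+p}=\zeta_j+\omega$. Consequently $\psi(\gamma(s))=-\phi(\gamma(s+\omega))$. The second is (A6) together with $g(s)=-g(s+\omega)$, which gives
\[
f(\psi(s),\psi(\gamma(s)))+g(s) = -\left[f(\phi(s+\omega),\phi(\gamma(s+\omega)))+g(s+\omega)\right],
\]
and also $h(\psi(\theta_j))=-h(\phi(\theta_j+\omega))=-h(\phi(\theta_{j+p}))$.

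Substitute $u=s+\omega$ in the integral and use (\ref{cauchy11}) in the form $U(t,s)=U(t+\omega,s+\omega)$. The integral term becomes $-\int_{-\infty}^{t+\omega}U(t+\omega,u)[\cdots]\,du$. In the sum, reindex $k=j+p$ and use (\ref{cauchy12}), $U(t,\theta_j+)=U(t+\omega,\theta_{k}+)$. The condition $\theta_j<t$ is equivalent to $\theta_k<t+\omega$, so the sum becomes $-\sum_{\theta_k<t+\omega}U(t+\omega,\theta_k+)h(\phi(\theta_k))$. Adding the two terms yields $(T\psi)(t)=-(T\phi)(t+\omega)=-\phi(t+\omega)=\psi(t)$. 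So $\psi$ is a fixed point of $T$ in $\mathcal{S}$, and the contraction property from Theorem \ref{lemmaboundedness} gives $\psi=\phi$.

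The main obstacle is the bookkeeping for the piecewise constant argument and the impulse sum. One must check that $\gamma$ commutes with the shift exactly, which uses the left-closed intervals $[\theta_j,\theta_{j+1})$. One must also check that the index shift $j\mapsto j+p$ matches the impulse moments, with strict inequalities preserved, so that the relations (\ref{cauchy11})--(\ref{cauchy12}) apply term by term. Absolute convergence of the integral and the series, already established in the proof of Theorem \ref{lemmaboundedness}, justifies the change of variables and the reindexing.
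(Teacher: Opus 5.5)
Your proof is correct. Its core computation is the same as the paper's: shift by $\omega$ and use (\ref{cauchy11}), (\ref{cauchy12}), $\gamma(s+\omega)=\gamma(s)+\omega$, (A6) and $g(s+\omega)=-g(s)$. What differs is how you close the argument.

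The paper introduces the set $\mathcal{AP}_{\omega}\subset\mathcal{S}$ of anti-periodic functions and asserts without proof that it is closed. It then shows $T(\mathcal{AP}_{\omega})\subseteq\mathcal{AP}_{\omega}$ and applies the Banach fixed point theorem a second time, on $\mathcal{AP}_{\omega}$. It leaves implicit that the resulting fixed point is the bounded solution of Theorem \ref{lemmaboundedness}; this follows because $\mathcal{AP}_{\omega}\subset\mathcal{S}$.

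You instead show that the reflection $\phi\mapsto-\phi(\cdot+\omega)$ sends the fixed point to another fixed point in $\mathcal{S}$, and conclude from uniqueness. In fact your computation $(T\psi)(t)=-(T\phi)(t+\omega)$ holds for every $\phi\in\mathcal{S}$, so $T$ commutes with this reflection. The paper's invariance $T(\mathcal{AP}_{\omega})\subseteq\mathcal{AP}_{\omega}$ is an immediate consequence of that.

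Your route avoids the closedness check for $\mathcal{AP}_{\omega}$ and the second appeal to Banach. It also speaks directly about \emph{the unique bounded solution}, as the statement requires. The only extra cost is checking that $\psi\in\mathcal{PC}(\theta)$, which you do using $\theta_{j+p}=\theta_j+\omega$. The paper's formulation also records uniqueness within the anti-periodic class, but that follows equally from yours.
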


\noindent \textbf{Proof.} Let us denote by $\mathcal{AP}_{\omega}$ the set   of all anti-periodic functions in  $\mathcal{S}$, which is given by (\ref{definitonsets}), with anti-period $\omega$, i.e.,
$$\mathcal{AP}_{\omega} = \left\lbrace \phi\in \mathcal{S}: \ \phi(t+\omega) = - \phi(t) \ \textrm{for every} \  t\in\mathbb R \right\rbrace. $$ 
One can confirm that $\mathcal{AP}_{\omega}$ is a closed subset of $\mathcal{S}$ endowed with the metric $d_{\infty}$. Thus, the metric space $\left( \mathcal{AP}_{\omega}, d_{\infty}\right) $ is complete. We take into account the operator $T$ given by equation (\ref{operatort}), this time defined on the metric space $\left( \mathcal{AP}_{\omega}, d_{\infty}\right)$. A fixed point of $T$ in $\mathcal{AP}_{\omega}$ is an anti-periodic solution of system (\ref{mainsyst}) with anti-period $\omega$. Our aim is to demonstrate the existence of a unique fixed point of $T$ in $\mathcal{AP}_{\omega}$ via the Banach fixed point theorem.

In the proof of Lemma \ref{lemmaboundedness} we showed that $T$ is a contraction mapping. Therefore, it is sufficient to verify that $T(\mathcal{AP}_{\omega})\subseteq \mathcal{AP}_{\omega}$.

Fix an element $\phi$ of $\mathcal{AP}_{\omega}$. We have for every $t\in\mathbb R$ that 
\begin{equation*} \begin{aligned}
(T \phi)(t+\omega) &= \displaystyle \int_{-\infty}^{t+\omega} U(t+\omega,s) \left[f(\phi(s), \phi(\gamma(s))) + g(s)\right] ds + \sum_{-\infty < \theta_j < t+\omega} U(t+\omega,\theta_j+) h(\phi(\theta_j)) \\
&= \displaystyle \int_{-\infty}^{t} U(t+\omega,s+\omega) \left[f(\phi(s+\omega), \phi(\gamma(s+\omega))) + g(s+\omega)\right] ds   \\ & + \sum_{-\infty < \theta_j < t} U(t+\omega,(\theta_j+\omega)+) h(\phi(\theta_j+\omega))
\end{aligned} \end{equation*} 
If $s$ is a fixed real number, then there is an integer $j_s$ such that $\theta_{j_s}\leq s < \theta_{j_s+1}$. Therefore, $\theta_{j_s+p}\leq s+\omega  < \theta_{j_s+p+1}$. The equations $\gamma(s) =\zeta_{j_s}$ and $\gamma(s+\omega) =\zeta_{j_s+p}$ are valid according to the description of the piecewise constant argument $\gamma$. Thus, we have
\begin{equation} \label{propgamma}
\gamma(s+\omega)=\zeta_{j_s+p}=\zeta_{j_s}+\omega=\gamma(s)+\omega. 
\end{equation}
 Using the equations (\ref{cauchy11}), (\ref{cauchy12}), and (\ref{propgamma}) together with the assumption $(A6)$ we obtain that
\begin{equation*} \begin{aligned}
& (T \phi)(t+\omega)= \displaystyle \int_{-\infty}^{t} U(t,s) \left[f(-\phi(s), -\phi(\gamma(s))) - g(s)\right] ds   + \sum_{-\infty < \theta_j < t} U(t,\theta_j+) h(-\phi(\theta_j)) \\ 
& = - \displaystyle \int_{-\infty}^{t} U(t,s) \left[f(\phi(s), \phi(\gamma(s))) + g(s)\right] ds   - \sum_{-\infty < \theta_j < t} U(t,\theta_j+) h(\phi(\theta_j)) \\
& = - (T\phi)(t).
\end{aligned} \end{equation*} 
The last equation yields $T(\mathcal{AP}_{\omega})\subseteq \mathcal{AP}_{\omega}$. Since $T$ is a contraction mapping, it admits a unique fixed point in $\mathcal{AP}_{\omega}$. Consequently, there exists a unique anti-periodic solution of the impulsive system (\ref{mainsyst}) with   anti-period $\omega$ in accordance with the Banach fixed point theorem. $\square$

\section{Examples} \label{secexamples}

To obtain anti-periodic solutions generated by impulsive systems we take into account the function $g:\mathbb R \to \mathbb R$ defined by
\begin{eqnarray} \label{antipergfunc}
	g(t)=\begin{cases}  (t-6n)^3, & \mbox{ if } \ t \in [6n,\, 6n+1), \ n\in\mathbb Z,   \vspace{.15cm} \\ 1-\displaystyle \frac{(t-6n-1)^2}{4} , &  \mbox{ if } \ t \in [6n+1,\, 6n+3),  \ n\in\mathbb Z, 
		\vspace{.15cm} \\ -(t-6n-3)^3, &  \mbox{ if } \ t \in [6n+3,\, 6n+4),  \ n\in\mathbb Z,   
		\vspace{.15cm} \\ -1 + \displaystyle \frac{(t-6n-4)^2}{4}, &  \mbox{ if }  \ t \in [6n+4,\,  6n+6),  \ n\in\mathbb Z. 
	\end{cases}
\end{eqnarray} 
The function $g$ is anti-periodic with anti-period $3$ and it is bounded such that $\displaystyle \sup_{t\in\mathbb R} \left\| g(t)\right\|\leq 1 $. The graph of this function is depicted in Figure \ref{figure1}.
\begin{figure}[htbp]
	\centering
	\includegraphics[width=1.0\textwidth]{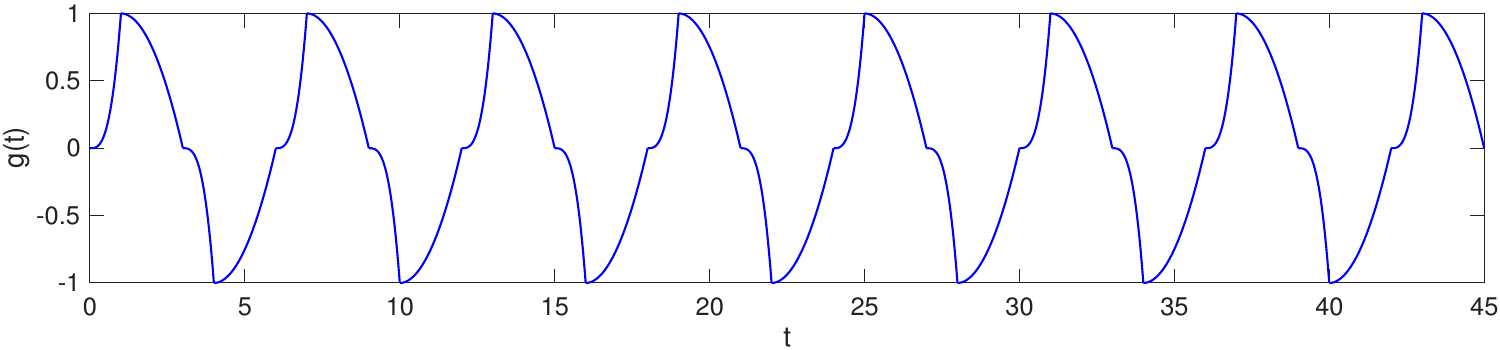}
	\caption{The graph of the function $g(t)$ defined by equation (\ref{antipergfunc}). It is seen in the figure that this function is anti-periodic with anti-period $3$. The boundedness is also observable.}
	\label{figure1}
\end{figure}

The purpose of the subsequent example is to demonstrate the distinctiveness of the present results compared to the papers \cite{Liu12,Liu13}.

\subsection{Example 1} 

Let us take into account the scalar impulsive differential equation
\begin{equation} \label{example1}
\begin{aligned}
& x'(t) +6x(t) =  0.25 \tanh(x(t)) + 0.15\tanh(x(\gamma(t))) + g(t), \ t\neq \theta_j, &\\	
& \Delta x|_{t=\theta_j} = -3x(\theta_j)  &  
\end{aligned}
\end{equation}
where $\theta_j=\displaystyle \frac{3}{2}j$, $\displaystyle \zeta_j= 1+\frac{3}{2}j$ for each $j\in\mathbb Z$ and $g(t)$ is the anti-periodic function defined by (\ref{antipergfunc}).

The assumptions of Theorem \ref{lemmaboundedness} and Theorem \ref{antiperthm} are fulfilled with $p=2$, $\omega=3$, $K=4$, $\alpha=6-\displaystyle \frac{2\ln 2}{3}$, $L_1=0.25$, and $L_2=0.15$. Accordingly, there is a unique bounded solution  of (\ref{example1}) and it is anti-periodic with anti-period $3$.

One can verify, on the other hand, that the scalar impulsive equation (\ref{example1}) is in the form  
\begin{equation} \label{example2}
	\begin{aligned}
		& x'(t) + a(t)x(t) = f(t,x(t), x(\gamma(t))) , \ t\neq \theta_j, \\
		& \Delta x|_{t=\theta_j} = J_j(x(\theta_j))  &  
	\end{aligned}
\end{equation}
with $a(t)=6$, $J_j(x)=-3x$, and $f(x_1,x_2) = 0.25 \tanh(x_1) + 0.15\tanh(x_2)+g(t)$. In this case, $x(x+J_j(x)) = -2x^2$, $J_j(x)(2x+J_j(x)) = 3x^2$, $xJ_j(x) = -3x^2$ for all $x \in \mathbb R$. The inequalities $x(x+J_j(x)) \geq 0$, $J_j(x)(2x+J_j(x)) \leq 0$, $x J_j(x) \geq 0$ were taken into account in the studies \cite{Liu12,Liu13} for the existence of at least one anti-periodic solution. However, in our case, none of them hold. This manifests that the hypotheses related to $J_j(x)$ mentioned in the papers \cite{Liu12,Liu13} are not fulfilled and the result of Theorem \ref{antiperthm} is novel.

In the next example, we focus on a $2$-dimensional impulsive system whose jump equation contains a nonlinear term.

\subsection{Example 2}  

In this example, we consider the impulsive system 
\begin{equation} \label{example3}
\begin{aligned}
& z_1'(t) = \displaystyle -\frac{13}{3} z_1(t) + \frac{5}{12} z_2(t)+ \frac{z_2}{10z_2^2+10} - \dfrac{1}{25} \sin(z_1(\gamma(t))) + 3g(t), \ t\neq \theta_j, &\\	
& z_2'(t) = \displaystyle \frac{4}{3} z_1(t) - \frac{17}{3} z_2(t)+ \frac{z_1}{20z_1^2+20} + \dfrac{1}{25} \sin(z_2(\gamma(t)))  -2g(t), \ t\neq \theta_j, &\\	
& \Delta z_1|_{t=\theta_j} = -\displaystyle \frac{2}{5} z_1(\theta_j) + \displaystyle \frac{1}{30} \arctan (z_1(\theta_j)), &  \\
& \Delta z_2|_{t=\theta_j} = -\displaystyle \frac{2}{5} z_2(\theta_j) - \displaystyle \frac{1}{35} \arctan (z_2(\theta_j)), &
\end{aligned}
\end{equation}
in which $\theta_j = \displaystyle \frac{3}{4}j + \frac{1}{8} (-1)^j$, $\zeta_j=  \displaystyle \frac{3}{8} + \frac{3}{4}j  $ for each $j\in\mathbb Z$ and $g(t)$ is the function defined by (\ref{antipergfunc}).
One can confirm that system (\ref{example3}) is in the form of (\ref{mainsyst}) with
\begin{equation*}
A= \begin{pmatrix} - \displaystyle \frac{13}{3}  &&  \displaystyle \frac{5}{12} \vspace{.2cm} \\ \displaystyle \frac{4}{3} && - \displaystyle \frac{17}{3}  \end{pmatrix}, \ B= \begin{pmatrix} -\displaystyle \frac{2}{5}  && 0 \\ 0 && -\displaystyle \frac{2}{5}  \end{pmatrix},
\end{equation*}
\begin{equation*}
f(x_1,x_2,y_1,y_2) = \displaystyle \begin{pmatrix} \displaystyle \frac{x_2}{10x_2^2+10} - \dfrac{1}{25} \sin(y_1) \vspace{.2cm}  \\ \displaystyle \frac{x_1}{20x_1^2+20} + \dfrac{1}{25} \sin(y_2)  \end{pmatrix}, \ h(x_1,x_2)=\begin{pmatrix} \displaystyle \frac{1}{30} \arctan(x_1) \vspace{.2cm}  \\ \displaystyle -\frac{1}{35} \arctan(x_2)  \end{pmatrix}.
\end{equation*}
The functions $f(x_1,x_2,y_1,y_2)$ and $h(x_1,x_2)$ are bounded, and the criteria $\theta_{j+p} = \theta_j+\omega$ and $\zeta_{j+p} = \zeta_j+\omega$ are valid for $p=4$ and $\omega=3$.
The eigenvalues of the matrix
\begin{equation*}
	A + \displaystyle \frac{p}{\omega} \textrm{Log}(I+B)= \begin{pmatrix} \displaystyle -\frac{13}{3}+ \frac{4}{3} \ln \left( \frac{3}{5}\right)  && \displaystyle \frac{5}{12} \\ \displaystyle \frac{4}{3}  && \displaystyle -\frac{17}{3}+ \frac{4}{3} \ln \left( \frac{3}{5}\right)   \end{pmatrix}
\end{equation*}
are the negative real numbers $-6+ \displaystyle \frac{4}{3}\ln \left( \frac{3}{5}\right)$ and $-4+ \displaystyle \frac{4}{3}\ln \left( \frac{3}{5}\right)$. The matriciant of the linear homogeneous impulsive system associated with (\ref{example3}) satisfies the equation
$$U(t,\tau) = \left(\displaystyle \frac{3}{5} \right)^{i([\tau,t))}P e^{D(t-\tau)} P^{-1}$$
for $t >\tau$, where $D=\textrm{diag}(-6,-4)$ and $$P=\begin{pmatrix} 1 && \displaystyle \frac{5}{4} \\ -4 && 1 \end{pmatrix}. $$
Accordingly, by taking $K=\left\|P \right\|\left\|P^{-1} \right\| \approx 2.91768$ and $\alpha=4$, it can be verified that the inequalities (\ref{matriciantineq1}) and (\ref{matriciantineq2}) hold. The assumptions $(A5)$ and $(A6)$ are also fulfilled for system (\ref{example3}) with $L_1=1/10$, $L_2=1/25$, and $L_h=1/30$. Therefore, there exists a unique solution of (\ref{example3}) which is bounded on the whole real axis by Theorem \ref{lemmaboundedness}. Moreover, this bounded solution is anti-periodic with anti-period $3$ in accordance with Theorem \ref{antiperthm}.

\section{Concluding Remarks} \label{secconcremark}
The present paper is concerned with anti-periodic solutions of impulsive systems with piecewise constant argument of generalized type. We also verify the existence and uniqueness of a bounded solution. Indeed, it is shown that the bounded solution is anti-periodic providing the function $g(t)$ in system (\ref{mainsyst}) has the same property. The presence of a nonlinear term with piecewise constant argument in the model constitutes the fundamental novelty. One of the examples is proposed to emphasize the distinctiveness of the new results. 

If we take into account the function $g(t)$ as an anti-periodic input for the impulsive system
\begin{eqnarray*} \label{concsystem}
	\begin{aligned}
		& z'(t) = A z(t) + f (z(t), z(\gamma(t))), \ t\neq \theta_j,    \\
		& \Delta z \big{|}_{t=\theta_j}= Bz(\theta_j) + h(z(\theta_j)),  
	\end{aligned} 
\end{eqnarray*}
then one can confirm that an anti-periodic output is generated. From the input-output point of view, we show that the dynamical behavior of the input is preserved by the impulsive system (\ref{mainsyst}).

The examples in Section \ref{secexamples} are constructed in such a way that the number $\omega$ in the criteria $\theta_{j+p}=\theta_j+\omega$, $\zeta_{j+p}=\zeta_j+\omega$, $j\in\mathbb Z$, and the anti-period of the function (\ref{antipergfunc}) are commensurable with a non-unit ratio.  This reveals that the number $p$ can be tuned to obtain a common value of the anti-period $\omega$ satisfying those criteria. 

In the future, our results can be extended for retarded impulsive differential equations \cite{Xia09} and systems of fractional differential equations with impulse actions \cite{Feckan12,Stamova16}.

\end{document}